\makeatletter \theoremstyle{plain}
 \newtheorem{thm}{Theorem}[section]
 \newtheorem{prop}[thm]{Proposition}
 \numberwithin{equation}{section} 
 \numberwithin{figure}{section} 
 \theoremstyle{plain}
 \theoremstyle{definition}
 \newtheorem{defn}[thm]{Definition}
  \newtheorem{cor}[thm]{Corollary}
\newcommand{\cB}{{{\mathcal B}}}
\newcommand{\cH}{{{\mathcal H}}}
\newcommand{\fH}{{{\mathfrak H}}}
\newcommand{\fS}{{{\mathfrak S}}}
\newcommand{\bH}{{{\bf H}}}
\newcommand{\bx}{{{\mathbf x}}}
\newcommand{\by}{{{\mathbf y}}}
\newcommand{\C}{{{\mathbb C}}}
\newcommand{\R}{{{\mathbb R}}}
\newcommand{\rd}{{{\rm d}}}
\begin{document}
\title [Bisectors in the Heisenberg group I]
{Bisectors in the Heisenberg group I}

\author{Gaoshun Gou  \& Yueping Jiang \& Ioannis D. Platis}
\address {School of science,
Chongqing-University of Posts and Telecommunications,
Chongqing {\rm 400065},
P. R. China.}
\email{gaoshungou@hnu.edu.cn}

\address {Department of Mathematics,
Hunan-University,
Changsha {\rm 410082},
P. R. China.}
\email{ypjiang@hnu.edu.cn}
\address{Department of Mathematics and Applied Mathematics,
University of Crete,
University Campus,
GR-70013 Heraklion Crete,
 Greece.}
\email{jplatis@math.uoc.gr}

\keywords{bisector, Heisenberg group, Kor\'anyi metric\\
{\it 2020 Mathematics Subject Classification:} Primary 51M10, 53A10, 53C17, 57M50}

\begin{abstract}
We show that metric bisectors with respect to the Kor\'anyi metric in the Heisenberg group are spinal spheres and vice versa. We also calculate explicitly their horizontal mean curvature.
\end{abstract}

\thanks{Part of this work has been carried out while IDP was visiting Hunan University, Changsha, PRC. Hospitality is gratefully appreciated. This work is supported by NSFC(No.11631010), NSFC(No.11901061) and cstc2021jcyj-msxm1883.}

\maketitle

\section{Introduction}
A {\it metric bisector} in a metric space $(X,d)$ is the subset $B(x_1,x_2)$ of points $x_1\neq x_2$ of $X$ that are equidistant from both $x_1$ and $x_2$:
$$
B(x_1,x_2)=\{x\in X\;|\;d(x_1,x)=d(x_2,x)\}.
$$
Metric bisectors enjoy the following property: if $f:X\to X$ is a similarity of $X$, that is, a mapping satisfying a relation of the form $d(f(x),f(y))=K_fd(x,y)$ for every $x,y\in X$, where $K_f$ is a positive constant depending only on $f$, then the $f$-image of any bisector is again a bisector. In general, bisectors can be quite complicated objects in an arbitrary metric space $X$.

The same holds for Riemannian manifolds $(M,g)$ with the metric $d_g$ induced by the Riemannian tensor $g$. The most tractable example of a bisector in a Riemannian manifold is of course that of $M=\R^n$, $g=\sum_{i=1}^ndx_i^2$ and $d(\bx,\by)=\|\bx-\by\|$ for each $\bx=(x_1,\dots,x_n)$ and $\by=(y_1,\dots,y_n)$ in $\R^n$. Here, $\|\cdot\|$ is the usual Euclidean norm. Then it follows at once that $B(\bx_1,\bx_2)$ is the hyperplane comprising of $\bx\in\R^n$ such that
$2\bx\cdot(\bx_1-\bx_2)=\|\bx_1\|^2-\|\bx_2\|^2$. We stress that due to invariance by similarities we would have chosen the points ${\bf 0}=(0,\dots,0)$ and ${\bf 1}=(1,0,\dots,0)$. The bisector of this points is the hyperplane $x_1=1/2$ and then we would have concluded that all bisectors are hyperplanes since all images of $x_1=1/2$ by Euclidean similarities are hyperplanes.

In this paper we study bisectors of the Heisenberg group $\fH$ endowed with the Kor\'anyi metric ${\rm d}_K$.
The Heisenberg group $\fH$ is the set $\mathbb{C}\times\mathbb{R}$ with multiplication $*$ given by
$$
(z,t)*(w,s)=(z+w,t+s+2\Im(z\overline{w})),
$$
for every $(z,t)$ and $(w,s)$ in $\fH$. The Kor\'anyi metric is then defined by
$$
{\rm d}_K(p,q)=\|p*q^{-1}\|,
$$
for each $p,q\in\fH$. Here $\|p\|_K=(|z|^4+t^2)^{1/4}$ for each $p=(z,t)\in\fH$, see Section \ref{sec:heisen} for details.

We note that this point that the Heisenberg group is a simple sub-Riemmanian manifold. Any such manifold  is naturally equipped with the  Carnot-Carath\'eodory metric $\mathrm{d}_{cc}$, see Section \ref{sec:heisen} and the references within. The metric $\mathrm{d}_{cc}$ is related to the metric ${\rm d}_K$ in quite few ways, for instance, among others we stress here that  both metrics have the same isometry and similarity groups.  However, bisectors with respect to $\mathrm{d}_{cc}$ are generally not the same objects as bisoctors with respect to ${\rm d}_K$; details about the study of those objects will appear elsewhere.

In this paper, we are dealing with bisectors with respect to the Kor\'anyi metric.
Let $p_1,p_2\in\fH$ be two distinct points and the Kor\'anyi bisector
\begin{equation*}
\cB(p_1,p_2)=\{p\in\fH\;|\;{\rm d}_K(p_1,p)={\rm d}_K(p_2,p)\}.
\end{equation*}
We may normalize so that $p_i$, $i=1,2$ lie either in the same finite or in the same infinite $\C$-circle, see Section \ref{sec-bisec} for details. Our main theorem is then the following.
\begin{thm}\label{thm-KB}
A Kor\'anyi bisector is a spinal sphere. Conversely, every spinal sphere is a Kor\'anyi bisector.
\end{thm}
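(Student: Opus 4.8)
The plan is to exploit invariance under Heisenberg similarities to reduce to two normal forms and then match defining equations directly. Since every similarity of $(\fH,{\rm d}_K)$ maps Kor\'anyi bisectors to Kor\'anyi bisectors (as noted in the introduction) and, extending to a similarity of complex hyperbolic space, also maps spinal spheres to spinal spheres, it suffices to treat the two normalized configurations isolated in Section~\ref{sec-bisec}: the case where $p_1,p_2$ lie on a common finite $\C$-circle, and the case where they lie on a common infinite $\C$-circle. After a rotation, a left translation and a dilation I would fix symmetric representatives, say $p_1=(1,0),\ p_2=(-1,0)$ in the finite case and $p_1=(0,1),\ p_2=(0,-1)$ in the infinite case.

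The heart of the matter is an explicit computation of $\cB(p_1,p_2)$ from the data given in the introduction. Writing $p=(z,t)$ with $z=x+iy$ and $p_i=(w_i,s_i)$, the group law gives $p_i*p^{-1}=(w_i-z,\,s_i-t-2\Im(w_i\bar z))$, so that
\begin{equation*}
{\rm d}_K(p_i,p)^4=|w_i-z|^4+\bigl(s_i-t-2\Im(w_i\bar z)\bigr)^2 .
\end{equation*}
Equating the two sides for $i=1,2$, the top-order terms $|z|^4$ and $t^2$ cancel, and hence $\cB(p_1,p_2)$ is cut out by an equation that is at most cubic in $(x,y)$ and affine in $t$. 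In the two normal forms this collapses to the cubic $x(1+x^2+y^2)+ty=0$ and to the plane $t=0$, respectively; in the finite case the vertical axis $\{z=0\}$ always lies in the bisector, as the symmetry of the configuration forces.

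It then remains to identify each of these loci with the asserted spinal sphere. Using the description of spinal spheres from Section~\ref{sec-bisec} --- their foliation by $\C$-circle slices over the real spine, together with the $\R$-circle spine --- I would write down the Cartesian equation of the spinal sphere attached to each normalized pair and check termwise that it agrees with the bisector equation found above; adjoining the point at infinity then exhibits the common set as a topological $S^2$. The converse is then formal: an arbitrary spinal sphere is a similarity image of one of these two standard ones, each standard spinal sphere has just been realized as a Kor\'anyi bisector, and bisectors are preserved under similarities. A coordinate-free shortcut would instead invoke the identity ${\rm d}_K(p,q)^2=2|\langle\hat p,\hat q\rangle|$ for the standard lifts $\hat p,\hat q$ to $\C^{2,1}$, which turns the bisector condition into $|\langle\hat p,\hat p_1\rangle|=|\langle\hat p,\hat p_2\rangle|$, exactly the boundary equation of the complex hyperbolic bisector with real spine endpoints $p_1,p_2$, thereby bypassing the case analysis. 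I expect the main obstacle to be the finite case: producing the spinal sphere equation in closed form and verifying it matches the cubic, while also confirming that the two normalizations exhaust all pairs and that the normalizing similarity genuinely sends spinal spheres to spinal spheres.
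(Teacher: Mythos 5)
Your proposal follows the paper's strategy almost verbatim (normalize by similarities to the two model pairs, compute the two defining equations, identify them with $\mathfrak{S}_0$ and $\mathfrak{S}_1$, then transport everything back by similarity invariance), and your explicit computations are correct. However, both your argument and the paper's hinge on a normalization claim that is false as stated, and this is a genuine gap. The group ${\rm Sim}(\fH)$ has dimension $5$ while the space of ordered pairs of distinct points of $\fH$ has dimension $6$, so it cannot act doubly transitively; concretely, if $p_1^{-1}*p_2=(\zeta,\tau)$ then the ratio $|\tau|/|\zeta|^2\in[0,\infty]$ is a similarity invariant of the pair. A pair on a finite $\C$-circle can be normalized only to $\{(e^{i\alpha},0),(e^{-i\alpha},0)\}$ for some $\alpha\in(0,\pi/2]$, and only the value $\alpha=\pi/2$ is equivalent to your model pair $\{(1,0),(-1,0)\}$. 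The forward direction can still be rescued: a direct computation shows that ${\rm d}_K\bigl((e^{i\alpha},0),p\bigr)={\rm d}_K\bigl((e^{-i\alpha},0),p\bigr)$ reduces to $y(x^2+y^2+1)+xt=0$ \emph{independently of} $\alpha$, so all these pairs share one bisector; but this extra verification is exactly what is missing. Your ``coordinate-free shortcut'' ${\rm d}_K(p,q)^2=2|\langle\hat p,\hat q\rangle|$ is in fact the cleanest repair, since $\{p:|\langle\hat p,\hat q_1\rangle|=|\langle\hat p,\hat q_2\rangle|\}$ is $\Pi_\Sigma^{-1}(\sigma)$ for the complex geodesic $\Sigma$ spanned by $\hat q_1,\hat q_2$ and a geodesic $\sigma\subset\Sigma$, hence a bisector boundary for \emph{every} pair with no normalization at all; I would promote that from a footnote to the actual proof of the forward implication.

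The converse is where the gap bites hardest, and I do not see how to close it along your lines. A spinal sphere is determined by its two vertices (the endpoints of its spine, equivalently its characteristic points), and a similarity carrying one spinal sphere to another must carry vertices to vertices. The vertices of $\mathfrak{S}_0$ are $\{0,\infty\}$ and those of $\mathfrak{S}_1=\{x(x^2+y^2+1)=yt\}$ are $(0,\pm1,0)$, an \emph{antipodal} pair on the unit $\C$-circle. But there exist spinal spheres whose two vertices are, say, $(1,0)$ and $(i,0)$ --- a non-antipodal pair on that same $\C$-circle --- or two finite points on a vertical line; since ${\rm Sim}(\fH)$ fixes $\infty$, preserves the type of the $\C$-circle through the vertices, and stabilizes the unit circle only by the $O(2)$ of rotations and conjugation, no similarity carries such a sphere to $\mathfrak{S}_0$ or $\mathfrak{S}_1$. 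Worse, the vertices of any Kor\'anyi bisector are either of the form $\{v,\infty\}$ or an antipodal pair on a finite $\C$-circle (the equidistant locus on the circle of two of its points is always an antipodal pair), so these extra spinal spheres appear not to be Kor\'anyi bisectors at all. You should therefore either restrict the class of spinal spheres under consideration or carry out the orbit analysis of spinal spheres under ${\rm Sim}(\fH)$ explicitly; the one-sentence appeal to transitivity, which you share with the paper, does not suffice.
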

Spinal spheres are the boundaries of bisectors of complex hyperbolic plane with respect to the Bergman metric. Such bisectors  have been used to construct  Dirichlet fundamental domains or Ford fundamental domains of a discrete subgroup of $\rm{PU}(n,1)$.
In particular, Parker and Will used  isometric spheres to construct  Ford fundamental domains in \cite{PW2017}.

Finally, as far as it concerns the horizontal geometry of Kor\'anyi bisectors/spinal spheres we prove that if $p_1$, $p_2$ lie on an infinite $\C$-circle, then the Kor\'anyi bisector is a horizontal minimal surface (see Proposition \ref{prop-min}), that is, its horizontal mean curvature vanishes everywhere. However, this is not the case if $p_1$, $p_2$ lie on an infinite $\C$-circle. The surface tends to be horizontally minimal away from its characteristic locus (see Proposition \ref{prop-notmin}).

This paper is organized as follows. In Section \ref{sec-prel}, we shall review some preliminaries about the complex hyperbolic plane and the bisectors with respect to the Bergman metric, as well as some basic knowledge for the Heisenberg group and the Kor\'anyi metric. In Section \ref{sec-bisec}, we prove Theorem \ref{thm-KB} and in Section \ref{sec-horcurv} we prove Proposition \ref{prop-notmin}.

\section{Preliminaries}\label{sec-prel}

\subsection{Complex hyperbolic plane $\bH^2_\C$ and its bisectors}
Let $\mathbb{C}^{2,1}$ be $\C^3$ equipped with a non degenerate,  Hermitian form $\left\langle\cdot,\cdot\right\rangle$ of signature $(2,1)$: If
$
\mathbf{z}=(z_1,z_2,z_3)^T\quad\text{and}\quad \mathbf{w}=(w_1,w_2,w_3)^T,$
 then
$$
\left\langle \mathbf{z,w}\right\rangle=z_1\overline{w_3}+z_2\overline{w_2}+z_3\overline{w_1}=\mathbf{w}^*H{\mathbf z},
$$
where $$
H=\left[
  \begin{array}{ccc}
    0 & 0& 1 \\
    0 & 1 & 0 \\
    1 & 0 & 0 \\
  \end{array}
\right].
$$
We consider the subsets
 \begin{align*}
      V_-= & \left.\left\{\mathbf{z}\in\mathbb{C}^{2,1}\right|\left\langle \mathbf{z},\mathbf{z}\right\rangle<0\right\} ,\\
      V_0= & \left.\left\{\mathbf{z}\in\mathbb{C}^{2,1}\right|\left\langle \mathbf{z},\mathbf{z}\right\rangle=0\right\} ,\\
      V_+= & \left.\left\{\mathbf{z}\in\mathbb{C}^{2,1}\right|\left\langle \mathbf{z},\mathbf{z}\right\rangle>0\right\}
    \end{align*}
and let also
    $$
\mathbb{P}:\mathbb{C}^{2,1}\ni\left[
             \begin{array}{c}
               z_1 \\
               z_2 \\
               z_3 \\
             \end{array}
           \right]
\mapsto\left[
         \begin{array}{c}
           z_1/z_3 \\
           z_2/z_3 \\
         \end{array}
       \right]\in\mathbb{C}^2.
$$
\begin{defn}
The complex hyperbolic plane $\mathbf{H}_{\mathbb{C}}^2$ is $\mathbb{P}(V_-)$ and its boundary $\partial\mathbf{H}_{\mathbb{C}}^2$ is $\mathbb{P}(V_0)$.
\end{defn}
The standard model for complex hyperbolic plane we use here is
the {\it Siegel domain model}:
$$
\mathbf{H}_{\mathbb{C}}^2=\{(z_1,z_2)\in\mathbb{C}^2\;|\;2\Re(z_1)+|z_2|^2<0\}.
$$
Let $(z_1,z_2)\in\mathbb{C}^2$. The {\it standard lift} of $z$ is
$
\mathbf{z}=(z_1,z_2,1)^T.$
In particular, the standard lift of $\infty$ is $(1,0,0)^T$. 
The {\it Bergman metric} of $\mathbf{H}_{\mathbb{C}}^2$ in terms of the distance function $\rho(\cdot,\cdot)$ is given by
$$
\cosh^2\left(\frac{\rho(z,w)}{2}\right)=\frac{\langle \mathbf{z},\mathbf{w}\rangle\langle\mathbf{w},\mathbf{z}\rangle}{\langle\mathbf{z},\mathbf{z}\rangle\langle\mathbf{w},\mathbf{w}\rangle},
$$
and this definition is independent of the choice of lifts. The following hold:
\begin{itemize}
\item The holomorphic sectional curvature is $-1$.
\item The sectional curvature is pinched between $-1$ and $-1/4$.
\item The group of holomorphic isometries is ${\rm PU}(2,1)$. The group ${\rm SU}(2,1)$, a three-fold covering of ${\rm PU}(2,1)$, is also used.
\end{itemize}
There are two types of geodesic submanifolds (of dimension 2):
First, we have {\it complex lines ($\mathbb{C}$-lines)}:  Let $z,w\in\overline{\mathbf{H}_{\mathbb{C}}^2}$ and let
$$
\mathbf{C}(\mathbf{z},\mathbf{w})={\rm span}_\C(\mathbf{z},\mathbf{w}),
$$
with $\mathbf{z},\mathbf{w}$ being lifts of $z,w$, respectively.
The $\mathbb{C}$-line $\mathcal{C}(z,w)$ is the complex projection $\mathbb{P}(\mathbf{C}(\mathbf{z},\mathbf{w}))$ of $\mathbf{C}(\mathbf{z},\mathbf{w})$. $\C$-lines are all isometric to
$$
\mathbf{H}_{\mathbb{C}}^1=\{z\in\mathbb{C}\;|\;\Re(z)<0\}.
$$
Second, we have {\it Lagrangian planes $\mathcal{R}$ (or, $\mathbb{R}$-planes)}: Those are  characterised by $\langle\mathbf{z},\mathbf{w}\rangle\in\mathbb{R}$ for all $z,w\in\mathcal{R}$ and are all isometric to
$$
\mathbf{H}_{\mathbb{R}}^2=\{(z_1,z_2)\in\mathbf{H}_{\mathbb{C}}^2\;|\;\Im(z_1)=\Im(z_2)=0\}.
$$

\subsubsection{$\bH^2_\C$-bisectors.}
 In contrast to the real hyperbolic space case, there are no geodesic submanifolds of dimension $3$. Bisectors are three dimensional submanifolds which are pretty close to be geodesic.
\begin{defn}
Let $z,w\in\mathbf{H}_{\mathbb{C}}^2$ be two distinct points. The {\it bisector} $\cB_\rho(z,w)$ of  $z$ and $w$ is
$$
\cB_\rho(z,w)=\left.\left\{x\in\mathbf{H}_{\mathbb{C}}^2\right|\rho(x,z)=\rho(x,w)\right\},
$$
where $\rho$ is the distance defined by the Bergman metric.
\end{defn}
The following are standard features of a bisector $\cB_\rho(z,w)$:
\begin{itemize}
 \item The {\it complex spine} $\Sigma$ of $\cB_\rho(z,w)$ is the complex geodesic $\mathcal{C}(z,w)$.
\item The {\it spine} $\sigma$ of $B_\rho(z,w)$ is $\cB_\rho(z,w)\cap\Sigma$, which is the geodesic corresponding to $\Sigma$.
\item The endpoints of the spine $\sigma$ are the {\it vertices} of the bisector and they determine it completely.
\end{itemize}
A bisector is foliated in two distinguished manners which are described in the following theorems, see \cite{G}.
\begin{thm}
{\bf Slice decomposition.} Let $\cB$ be a bisector, $\Sigma$ its complex spine and $\sigma$ its spine. Then
$$
\cB=\Pi_\Sigma^{-1}(\sigma)=\bigcup_{p\in\sigma}\Pi_\Sigma(p),
$$
where $\Pi_\Sigma:\bH^2_\C\to\Sigma$ is the orthogonal projection to $\Sigma$.
\end{thm}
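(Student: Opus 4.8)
The plan is to reduce the whole statement to a single algebraic identity for the orthogonal projection onto a complex line, computed on lifts. Write $\Sigma=\mathcal{C}(z,w)$ for the complex spine and let $\mathbf{n}\in V_+$ be a polar vector of $\Sigma$, so that $\Sigma=\mathbb{P}(\mathbf{n}^\perp\cap V_-)$ and $\Pi_\Sigma$ is induced on lifts by the linear map
$$
\mathbf{x}\longmapsto\mathbf{m}:=\mathbf{x}-\frac{\langle\mathbf{x},\mathbf{n}\rangle}{\langle\mathbf{n},\mathbf{n}\rangle}\,\mathbf{n}.
$$
The crucial observation is that for every $x\in\bH^2_\C$ and every $p\in\Sigma$ one has, with lifts $\mathbf{x},\mathbf{p}$ and $\mathbf{m}$ as above,
$$
\langle\mathbf{x},\mathbf{p}\rangle=\langle\mathbf{m},\mathbf{p}\rangle,
$$
since $\langle\mathbf{n},\mathbf{p}\rangle=\overline{\langle\mathbf{p},\mathbf{n}\rangle}=0$ for any lift $\mathbf{p}$ of a point of $\Sigma$.

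First I would feed this identity into the Bergman distance formula. Substituting $\langle\mathbf{x},\mathbf{p}\rangle=\langle\mathbf{m},\mathbf{p}\rangle$ and inserting $\langle\mathbf{m},\mathbf{m}\rangle/\langle\mathbf{m},\mathbf{m}\rangle$ yields the factorization
$$
\cosh^2\!\left(\frac{\rho(x,p)}{2}\right)=\frac{\langle\mathbf{x},\mathbf{p}\rangle\langle\mathbf{p},\mathbf{x}\rangle}{\langle\mathbf{x},\mathbf{x}\rangle\langle\mathbf{p},\mathbf{p}\rangle}=\frac{\langle\mathbf{m},\mathbf{m}\rangle}{\langle\mathbf{x},\mathbf{x}\rangle}\cdot\cosh^2\!\left(\frac{\rho(m,p)}{2}\right),
$$
where $m=\Pi_\Sigma(x)\in\Sigma$. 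The decisive point is that the prefactor $\lambda(x)=\langle\mathbf{m},\mathbf{m}\rangle/\langle\mathbf{x},\mathbf{x}\rangle$ is a positive constant depending only on $x$ and $\Sigma$, and not on the point $p\in\Sigma$; positivity follows from $x,m\in V_-$.

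Next I would apply this with $p=z$ and $p=w$, which is legitimate because $z,w\in\Sigma$ by the very definition of the complex spine. Since $\cosh$ is strictly increasing on $[0,\infty)$ and the factor $\lambda(x)$ cancels, I obtain the chain of equivalences
$$
x\in\cB\iff\rho(x,z)=\rho(x,w)\iff\rho(m,z)=\rho(m,w)\iff m\in\cB\cap\Sigma=\sigma,
$$
which is precisely $x\in\Pi_\Sigma^{-1}(\sigma)$. This establishes $\cB=\Pi_\Sigma^{-1}(\sigma)$. The second displayed equality is then the tautological fibering of a preimage, $\Pi_\Sigma^{-1}(\sigma)=\bigcup_{p\in\sigma}\Pi_\Sigma^{-1}(p)$, each fiber (slice) $\Pi_\Sigma^{-1}(p)$ being the complex line through $p$ that meets $\Sigma$ orthogonally.

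The main obstacle is purely the verification of the projection identity and the ensuing factorization: once $\langle\mathbf{x},\mathbf{p}\rangle=\langle\mathbf{m},\mathbf{p}\rangle$ is in hand, the equidistance condition descends to $\Sigma$ automatically because the $p$-independent factor $\lambda(x)$ drops out. Care is needed only to confirm that the projection formula is independent of the choice of lift of $x$ and of polar vector $\mathbf{n}$, and that $\mathbf{m}\in V_-$, so that $m$ genuinely lies in $\Sigma\subset\bH^2_\C$ and $\lambda(x)>0$.
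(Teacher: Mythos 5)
Your argument is correct, and in fact the paper offers no proof of this statement at all: it is quoted as a standard result with a pointer to Goldman's book, so there is nothing in the text to compare against except the reference. Your route is essentially the classical one. The key identity $\langle\mathbf{x},\mathbf{p}\rangle=\langle\mathbf{m},\mathbf{p}\rangle$ for $\mathbf{m}=\mathbf{x}-\frac{\langle\mathbf{x},\mathbf{n}\rangle}{\langle\mathbf{n},\mathbf{n}\rangle}\mathbf{n}$ and $p\in\Sigma$ is right, the prefactor $\lambda(x)=\langle\mathbf{m},\mathbf{m}\rangle/\langle\mathbf{x},\mathbf{x}\rangle$ is indeed independent of $p$ and positive (one computes $\langle\mathbf{m},\mathbf{m}\rangle=\langle\mathbf{x},\mathbf{x}\rangle-|\langle\mathbf{x},\mathbf{n}\rangle|^2/\langle\mathbf{n},\mathbf{n}\rangle\le\langle\mathbf{x},\mathbf{x}\rangle<0$, so $\mathbf{m}\in V_-$ and $\lambda(x)\ge 1$), and the cancellation of $\lambda(x)$ in the equidistance condition gives $\cB=\Pi_\Sigma^{-1}(\sigma)$ with $\sigma=\cB\cap\Sigma$ exactly as the paper defines the spine. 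One point you flag as needing care --- that the lift formula really is the orthogonal (nearest-point) projection --- is actually settled by your own factorization: since $\lambda(x)\ge 1$, the identity $\cosh^2(\rho(x,p)/2)=\lambda(x)\cosh^2(\rho(m,p)/2)$ shows $\rho(x,p)$ is minimized over $p\in\Sigma$ precisely at $p=m$, so no external input is needed there. Finally, note that the displayed formula in the theorem as printed, $\bigcup_{p\in\sigma}\Pi_\Sigma(p)$, is evidently a typo for $\bigcup_{p\in\sigma}\Pi_\Sigma^{-1}(p)$; your reading (the slices are the fibers $\Pi_\Sigma^{-1}(p)$, i.e.\ the complex lines through $p$ polar to $\Sigma$) is the intended one.
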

\begin{thm}
{\bf Meridianal decomposition.} Let $\sigma$ be a geodesic in $\bH^2_\C$. Then the bisector $\cB$ which has $\sigma$ as its spine, is the union of all Lagrangian planes containing $\sigma$.
\end{thm}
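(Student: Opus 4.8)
The plan is to prove the two inclusions separately, after reducing to a convenient normal form. Since ${\rm PU}(2,1)$ acts transitively on geodesics, I would first normalise $\sigma$: passing to the unit ball model of $\bH^2_\C$, take the complex spine to be $\Sigma=\{(z_1,0)\}$ and $\sigma$ to be its real diameter, with ideal endpoints (the vertices of $\cB$) at $(\pm 1,0)$. Choosing the generating points $z=(ia,0)$ and $w=(-ia,0)$ for some $a\in(0,1)$, their perpendicular bisector inside $\Sigma$ is exactly $\sigma$, so the bisector with spine $\sigma$ is $\cB=\cB_\rho(z,w)$.

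For the inclusion $\bigcup\mathcal{R}\subseteq\cB$, I would argue uniformly over all Lagrangian planes $\mathcal{R}$ containing $\sigma$. Each such $\mathcal{R}$ is the fixed-point set of an anti-holomorphic isometric involution $\iota_{\mathcal{R}}$. Because $\sigma=\mathcal{R}\cap\Sigma$, the reflection $\iota_{\mathcal{R}}$ preserves $\Sigma$ and restricts there to the reflection of the complex line $\Sigma$ in the geodesic $\sigma$; this reflection interchanges $z$ and $w$ since $\sigma$ is their perpendicular bisector in $\Sigma$. Consequently, for any $x\in\mathcal{R}$ we have $\rho(x,z)=\rho(\iota_{\mathcal{R}}x,\iota_{\mathcal{R}}z)=\rho(x,w)$, so $x\in\cB$.

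For the reverse inclusion $\cB\subseteq\bigcup\mathcal{R}$, I would use the Bergman distance formula directly. Normalising the lifts so that $\langle\mathbf{z},\mathbf{z}\rangle=\langle\mathbf{w},\mathbf{w}\rangle$, the defining condition $\rho(x,z)=\rho(x,w)$ collapses to $|\langle\mathbf{x},\mathbf{z}\rangle|=|\langle\mathbf{x},\mathbf{w}\rangle|$, and with the chosen coordinates a one-line computation reduces this to the single real equation $\Im(x_1)=0$. Finally, the holomorphic rotations $\rho_\theta\colon(z_1,z_2)\mapsto(z_1,e^{i\theta}z_2)$ fix $\sigma$ pointwise and carry the base Lagrangian plane $\mathcal{R}_0=\{(x_1,x_2):x_1,x_2\in\R\}$ to a one-parameter family $\mathcal{R}_\theta$ of Lagrangian planes through $\sigma$; every point with $\Im(x_1)=0$ lies on the member with $\theta=\arg(x_2)$, which identifies $\cB$ with the union of the $\mathcal{R}_\theta$.

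The main obstacle I anticipate is the bookkeeping that makes the two descriptions coincide exactly: one must check that the family $\{\mathcal{R}_\theta\}$ is the complete set of Lagrangian planes containing $\sigma$ (equivalently, that the rotation subgroup fixing $\sigma$ pointwise acts transitively on them), so that the union really foliates $\cB$ rather than merely covering it. The anti-holomorphic step — verifying that $\iota_{\mathcal{R}}$ genuinely swaps $z$ and $w$ for every $\mathcal{R}$ and not only for the symmetric plane $\mathcal{R}_0$ — is the other place where care is needed, though in the normal coordinates it follows because each $\iota_{\mathcal{R}_\theta}$ acts on the first coordinate by complex conjugation irrespective of $\theta$.
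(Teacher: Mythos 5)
The paper offers no proof of this statement: it is quoted as a standard structural fact about bisectors, with a pointer to Goldman's book, so there is no ``paper proof'' to compare against. Judged on its own, your argument is essentially the classical one (Mostow's meridianal decomposition as presented in Goldman) and it is correct. The normalisation is legitimate (${\rm PU}(2,1)$ is transitive on geodesics), the inclusion $\bigcup\mathcal{R}\subseteq\cB$ via the anti-holomorphic involution is sound --- the key point, which you state tersely but correctly, is that $\iota_{\mathcal{R}}$ fixes $\sigma$ pointwise and hence preserves the \emph{unique} complex line $\Sigma$ containing $\sigma$, restricting there to the reflection in $\sigma$, which swaps $z$ and $w$ --- and the computation $|\langle\mathbf{x},\mathbf{z}\rangle|=|\langle\mathbf{x},\mathbf{w}\rangle|\iff\Im(x_1)=0$ checks out in the ball model with lifts $(\pm ia,0,1)$. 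One small simplification: the completeness of the family $\{\mathcal{R}_\theta\}$, which you flag as the main obstacle, is not actually needed for the statement as given. You only need the chain $\bigcup\{\mathcal{R}:\sigma\subset\mathcal{R}\}\subseteq\cB\subseteq\bigcup_\theta\mathcal{R}_\theta\subseteq\bigcup\{\mathcal{R}:\sigma\subset\mathcal{R}\}$, and both outer inclusions are already established by your two steps; completeness only matters if you want the sharper assertion that the meridians form exactly a circle's worth of leaves. (It does hold: a Lagrangian plane through $\sigma$ is determined by its tangent plane at the origin, which must be $\R e_1\oplus\R e^{i\theta}e_2$ for some $\theta$.)
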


The group of holomorphic isometries ${\rm PU}(2,1)$ of $\bH^2_\C$  acts transitively on bisectors. Therefore we have:
\begin{cor}
All bisectors are isometric to the bisector $\cB_0$ whose spine is $\sigma_0=(0,\infty)$.
\end{cor}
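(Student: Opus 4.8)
The plan is to obtain the corollary from the transitivity of $\mathrm{PU}(2,1)$ together with the fact, recorded above, that a bisector is completely determined by its two vertices. The first step is to verify that holomorphic isometries carry bisectors to bisectors and respect all the attached data. If $g\in\mathrm{PU}(2,1)$ and $z,w\in\bH^2_\C$ are distinct, then since $g$ preserves the Bergman distance $\rho$ we have, for every $x$, that $\rho(x,z)=\rho(x,w)$ if and only if $\rho(g(x),g(z))=\rho(g(x),g(w))$; hence
\begin{equation*}
g\bigl(\cB_\rho(z,w)\bigr)=\cB_\rho(g(z),g(w)).
\end{equation*}
Because $g$ is holomorphic it sends the $\C$-line $\mathcal{C}(z,w)$ to $\mathcal{C}(g(z),g(w))$, and being an isometry it carries the complex spine $\Sigma$, the spine $\sigma$, and therefore the two vertices of $\cB_\rho(z,w)$ to the corresponding objects of $\cB_\rho(g(z),g(w))$.

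The second step reduces the statement to a purely boundary question. Since a bisector is completely determined by its vertices, i.e.\ by the two endpoints of its spine $\sigma$ on $\partial\bH^2_\C$, it suffices, given an arbitrary bisector $\cB$ with vertices $\{a,b\}\subset\partial\bH^2_\C$, to produce $g\in\mathrm{PU}(2,1)$ with $\{g(a),g(b)\}=\{0,\infty\}$. By the first step such a $g$ then maps $\cB$ onto the bisector with spine $\sigma_0=(0,\infty)$, namely $\cB_0$, which is exactly the assertion of the corollary.

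It therefore remains to prove that $\mathrm{PU}(2,1)$ acts transitively on unordered pairs of distinct boundary points. In the Siegel model this is elementary: the stabilizer of $\infty$ (standard lift $(1,0,0)^T$) contains the Heisenberg left translations, which act simply transitively on $\partial\bH^2_\C\setminus\{\infty\}\cong\fH$, while the Heisenberg inversion interchanges $0$ and $\infty$; combining these, one first moves $a$ to $\infty$ and then, using the stabilizer of $\infty$, moves the image of $b$ to $0$. Writing down the explicit $\mathrm{SU}(2,1)$ matrices and checking their action on the standard lifts is the only computational point, and it is routine. I expect no genuine obstacle here: the whole conceptual content lies in the observation of the first two steps that the vertices form a complete isometry invariant of a bisector, so that transitivity on bisectors is equivalent to two-point transitivity on the boundary sphere, and the latter is a standard feature of complex hyperbolic geometry.
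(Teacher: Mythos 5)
Your proposal is correct and follows the same route as the paper, which simply invokes the transitivity of ${\rm PU}(2,1)$ on bisectors; you supply the standard justification of that fact (isometries preserve bisectors and their vertices, bisectors are determined by their vertices, and ${\rm PU}(2,1)$ is doubly transitive on $\partial\bH^2_\C$). No gaps.
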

We will also consider for further use the bisector $\cB_1$ whose spine is $\sigma_1=((-1,0),(1,0))$.
\subsubsection{Spinal spheres}
\begin{defn}
A spinal sphere $\fS$ is the boundary of a bisector $\cB$ in $\partial\bH^2_\C$.
\end{defn}
The following hold:
\begin{itemize}
\item A spinal sphere is fully determined by its vertices.
\item ${\rm PU}(2,1)$ acts transitively on spinal spheres.
\item Each spinal sphere is the image of $\fS_0=\partial\cB_0=\C$ via an element of ${\rm PU}(2,1)$.
\end{itemize}
We shall also denote by $\fS_1$ the spinal sphere $\partial\cB_1$. This is the hypersurface with equation
\begin{equation*}\label{eq-fin}
f(x,y,t)=x(x^2+y^2+1)-yt=0.
\end{equation*}
\subsection{Heisenberg group}\label{sec:heisen}
The boundary $\partial\bH^2_\C\setminus\{\infty\}$ is in bijection with the Heisenberg group $\fH$; this  is the set $\mathbb{C}\times\mathbb{R}$ with multiplication $*$ given by
$$
(z,t)*(w,s)=(z+w,t+s+2\Im(z\overline{w})),
$$
for every $(z,t)$ and $(w,s)$ in $\fH$. There are two natural (left invariant) metrics defined in $\fH$. First, we have the {\it Kor\'anyi-Cygan metric} given by
$$
\mathrm{d}_{K}((z,t),(w,s))=|(z,t)^{-1}*(w,s)|_{K},
$$
where $|\cdot|_K$ is the Kor\'anyi gauge given by
$$
|(z,t)|_{K}=||z|^2-it|^{1/2},
$$
for each $(z,t)\in\fH$.

The similarity group ${\rm Sim}(\fH)$ of $\fH$ with respect to the Kor\'anyi metric 
comprises the following transformations:
\begin{enumerate}
\item Left translations $L_p$, $p\in\fH$, defined by
$$
L_p(q)=p*q,
$$
for each $q\in\fH$.
\item Rotations around the vertical axis $R_\theta$, $\theta\in\mathbb{R}$, defined by
$
R_\theta(z,t)=(e^{i\theta}z,t),
$
for each $(z,t)\in\fH$.
\item Dilations $D_\delta$, $\delta>0$, defined by
$
D_\delta(z,t)=(\delta z,\delta^2 t),
$
for each $q\in\fH$.
\item Conjugation $j$, defined by
$
j(z,t)=(\overline{z},-t),
$
for each $q\in\fH$.
\end{enumerate}

Left translations, rotations and conjugation are the isometry group of $\fH$ for $\rd_K$. The similarity group ${\rm Sim}(\fH)$ may be viewed as the isotropy subgroup of $\infty$ in ${\rm SU}(2,1)$, 
 see \cite{P}.

The following holds, see \cite[Proposition 2.6]{PW2017} or \cite[ Proposition 3.1]{PW2015}:

 \begin{prop}\label{prop-doubly-tans}
 The similarity group ${\rm Sim}(\fH)$ acts doubly transitively on the Heisenberg group.
 \end{prop}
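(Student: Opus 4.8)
The plan is to build the argument on the simply transitive action of the Heisenberg left translations, with the residual linear symmetries taking care of the second point, and to use the embedding into ${\rm SU}(2,1)$ as a sanity check throughout. Recall from the excerpt that ${\rm Sim}(\fH)$ is the isotropy subgroup of $\infty$ inside ${\rm SU}(2,1)$ and that $\partial\bH^2_\C=\fH\cup\{\infty\}$. Since ${\rm SU}(2,1)$ acts doubly transitively on the boundary of the rank-one symmetric space $\bH^2_\C$, the stabiliser of $\infty$ already acts transitively on the remaining boundary points, that is, on $\fH$; this hands us the first layer of transitivity for free and dictates how to set up the second.

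To argue this directly, I would first exploit the left translations $L_p(q)=p*q$. They form a subgroup isomorphic to $(\fH,*)$ and act simply transitively on $\fH$, so any prescribed point can be carried to the origin $o=(0,0)$. Applying this to the first member of each of the two given ordered pairs reduces the problem to showing that the isotropy group $G_o=\langle R_\theta, D_\delta, j\rangle$ carries one ``second point'' onto the other. Given a point $(z,t)\neq o$, the plan is: rotate by a suitable $R_\theta$ to make $z$ real and nonnegative, apply a dilation $D_\delta$ to normalise the scale, and finally use the conjugation $j(z,t)=(\overline{z},-t)$ to fix the orientation of the vertical coordinate, driving $(z,t)$ to a standard normal form. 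The degenerate case $z=0$, where the point sits on the vertical axis, has to be treated on its own.

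The main obstacle is exactly this isotropy step, and it is where the geometry of $\fH$ departs from the Euclidean picture of the introduction: a dilation scales the two factors unevenly, $z\mapsto\delta z$ while $t\mapsto\delta^{2}t$, so the horizontal and vertical displacements cannot be normalised independently. Controlling this anisotropy is the crux, and it is precisely the feature responsible for the finite-versus-infinite $\C$-circle dichotomy invoked in the normalisation preceding Theorem \ref{thm-KB}, according to whether the translated second point has $z\neq0$ or lies on the vertical axis. The cleanest way to keep the bookkeeping honest, and to confirm that both pairs reduce to a common normal form, is to perform the whole reduction inside ${\rm SU}(2,1)$ acting on $\partial\bH^2_\C$, where the translations, rotations, dilations and the conjugation all appear as explicit boundary maps.
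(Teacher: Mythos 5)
The paper offers no proof of this proposition at all --- it is quoted from Parker--Will --- so the only thing to measure your plan against is the statement itself, and there the news is bad: the step you yourself single out as ``the crux'' is not a bookkeeping difficulty but a genuine obstruction, and your proposal never gets past it. After a left translation moves the first point of each pair to the origin $o=(0,0)$, you would need the stabiliser of $o$ in ${\rm Sim}(\fH)$, namely the group generated by $R_\theta$, $D_\delta$ and $j$, to act transitively on $\fH\setminus\{o\}$. It does not: every one of these maps preserves the quantity $\iota(z,t)=|t|/|z|^2\in[0,+\infty]$, since rotations and $j$ preserve $|z|$ and $|t|$, while $D_\delta$ multiplies $|t|$ and $|z|^2$ by the same factor $\delta^2$. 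Hence ordered pairs $\bigl(o,(z,t)\bigr)$ with different values of $\iota$ lie in different orbits, and ${\rm Sim}(\fH)$ is not doubly transitive in the literal sense. Your proposed sanity check inside ${\rm SU}(2,1)$ confirms rather than removes the problem: double transitivity of the stabiliser of $\infty$ on $\fH$ would amount to transitivity of ${\rm PU}(2,1)$ (extended by the antiholomorphic involution) on triples of distinct boundary points, which is ruled out by the Cartan angular invariant; indeed $\iota$ is essentially the Cartan invariant of the triple $\bigl(\infty,o,(z,t)\bigr)$. So the final sentence of your proposal, that ``both pairs reduce to a common normal form,'' cannot be made to work, and no rearrangement of rotations, dilations and $j$ will fix it.

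What survives, and what you should prove instead, is the orbit description that the invariant computation above already gives: ${\rm Sim}(\fH)$ is transitive on points; the pairs lying on a common infinite $\C$-circle (equal $z$-coordinates, $\iota=\infty$) form a single orbit containing $\bigl((0,-1),(0,1)\bigr)$; and the pairs on finite $\C$-circles are classified by the residual modulus $\iota\in[0,\infty)$, with $\bigl((-1,0),(1,0)\bigr)$ representing only the orbit $\iota=0$ (translate by $(1,0)$ to get the pair $\bigl(o,(2,0)\bigr)$). Be aware that this affects the way the proposition is used later in the paper: the normalisation of the finite-$\C$-circle case to the single pair $\bigl((-1,0),(1,0)\bigr)$ covers only one of a one-parameter family of ${\rm Sim}(\fH)$-orbits.
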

For clarification purposes, we describe in brief the second metric although it is not of our interest in the present paper. 
The Heisenberg group $\fH$ is a 2-step nilpotent Lie group; we consider the basis for the left invariant vector fields of $\fH$ comprising
$$
X=\frac{\partial}{\partial x}+2y\frac{\partial}{\partial t},\quad
Y=\frac{\partial}{\partial y}-2x\frac{\partial}{\partial t},\quad
T=\frac{\partial}{\partial t}.
$$
Denote by $\mathfrak{h}$ the Lie algebra of $\fH$. There exists a decomposition:
$
\mathfrak{h}=V^1\oplus V^2,
$
where
$$
V^1={\rm span}_{\mathbb{R}}\left\{X,Y\right\},\quad V^2={\rm span}_{\mathbb{R}}\left\{T\right\}.
$$
The contact structure of $\fH$ is induced by the 1-form
$$
\omega={\rm d}t+2(x{\rm d}y-y{\rm d}x)={\rm d}t+2\Im(\overline{z}{\rm d}z),
$$
where $z=x+iy$. By the contact version of Darboux's Theorem, $\omega$ is the unique 1-form  such that $X,\,Y\in\ker\omega,\,\omega(T)=1$.
For each point $p\in \fH$, $V^1_p=\cH_p(\fH)$ is the {\it horizontal tangent space of $\fH$ at $p$}.
On the other hand, consider the relations
$$(X,X)_h=(Y,Y)_h=1,\;
( X,Y)_h=( Y,X)_h=0.
$$
From these relations we obtain the  sub-Riemannian metric $(\cdot,\cdot)_h$ in $\fH$; its induced norm shall be denoted by $|\cdot|_h$.
A smooth curve $\gamma:[a,b]\to\fH$  with
$$
\gamma(s)=(z(s),t(s))\in\mathbb{C}\times\mathbb{R},
$$ 
is called a {\it horizontal curve} if $\dot{\gamma}\in \cH_{\gamma(s)}(\fH)$ for all $s\in[a,b]$. Equivalently,
$$
\dot{t}(s)=-2\Im\left(\overline{z(s)}\dot{z}(s)\right),
$$
for $s\in[a,b]$.
The {\it horizontal length} of a smooth rectifiable curve $\gamma$ with respect to $|\cdot|_h$ is given by
$$
\ell_h(\gamma)=\int_a^b|\dot{\gamma}_h(s)|_h{\rm d}s=\int_a^b\left[\left(\dot{\gamma}(s),X_{\gamma(s)}\right)_h^2+
\left(\dot{\gamma}(s),Y_{\gamma(s)}\right)_h^2\right]^{1/2}{\rm d}s=\int_a^b|\dot z(s)|ds.
$$
The {\it Carnot-Carath\'eodory} distance $\mathrm{d}_{cc}(p,q)$ between any two points $p,q\in\fH$ is then the infimum of horizontal lengths of all horizontal curves joining $p,q$. We note the following:
\begin{itemize}
\item A  neat way to write down explicitly the distance formula may be found in \cite{CCG}.
\item There is a relation between the Kor\'anyi gauge and the Carnot-Carath\'eodory norm, see Proposition 2.1 in \cite{KN}.
\item The metrics $\mathrm{d}_{K}$ and $\mathrm{d}_{cc}$ are bi-Lipschitz equivalent; however, note  that $\mathrm{d}_{cc}$ is a path metric whereas $\mathrm{d}_{K}$ is not.
\item Both $\mathrm{d}_{K}$ and $\mathrm{d}_{cc}$ have the same isometry and similarity groups.
\end{itemize}

\section{Bisectors in the Heisenberg group}\label{sec-bisec}
Suppose that $\rd_K$ is the Kor\'anyi-Cygan in the Heisenberg group $\fH$. Let $p_1,p_2\in\fH$ be two distinct points and the Kor\'anyi bisector be
\begin{equation*}
\cB(p_1,p_2)=\{p\in\fH\;|\;{\rm d}_K(p_1,p)={\rm d}_K(p_2,p)\}.
\end{equation*}
From the properties of ${\rm d}_K$ it follows immediately that
the image of a Kor\'anyi bisector under a Heisenberg similarity is a Kor\'anyi bisector. Now for any two points $p_1$ and $p_2$ in $\fH$ there is always a unique $\C$-circle passing through $p_1$ and $p_2$ (see Theorem 4.3.5 in \cite{G}).
Heisenberg similarities map finite $\C$-circles to finite $\C$-circles and infinite $\C$-circles to infinite $\C$-circles. If $\cB(p_1,p_2)$ is a Kor\'anyi bisector, by Proposition \ref{prop-doubly-tans}, we may always normalize so that
\begin{enumerate}
\item $p_1=(0,-1)$ and $p_2=(0,1)$ in the case where $p_1,p_2$ lie in the same infinite $\C$-circle. We denote by $\cB^0$ the bisector $\cB((0,-1),(0,1))$.
\item $p_1=(-1,0)$ and $p_2=(1,0)$ in the case where $p_1,p_2$ lie in the same infinite $\C$-circle. We denote by $\cB^1$ the bisector $\cB((-1,0),(1,0))$.
\end{enumerate}
\subsection{Kor\'anyi bisectors}
We are now set to prove the Theorem \ref{thm-KB}:
\begin{center}
\includegraphics[scale=0.3]{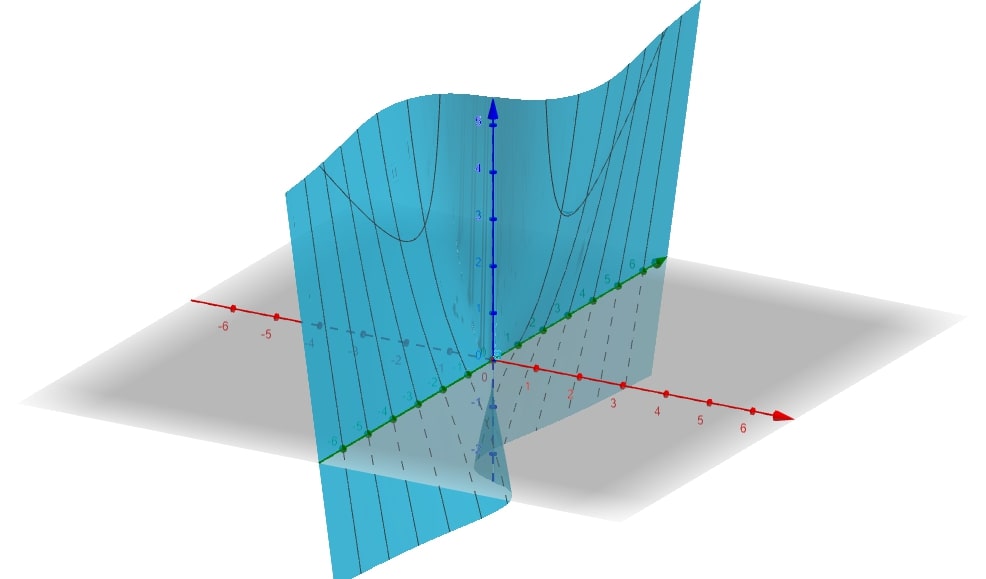}
\begin{figure}[!h]
\centering{Kor\'anyi bisector, finite $\C$-circle case: The surface $f(x,y,t)=x(x^2+y^2+1)-yt=0.$
}
\end{figure}
\end{center}
\begin{proof}
Let $\cB_K(p_1,p_2)$ be a Kor\'anyi bisector. It suffices to consider the cases where this is  $\cB^0_K$ and $\cB^1_K$, as above.

In the case of  $\cB^0_K$, the equation $${\rm d}_{K}((0,-1),p)={\rm d}_{K}((0,1),p)$$ is just
$$
|z|^4+(t+1)^2=|z|^4+(t-1)^2
$$
from where it follows that
$$
\cB^0_K=\{(z,t)\in\fH\;|\;t=0\}=\C.
$$
The complex plane $\C$ is the spinal sphere $\mathfrak{S}_0$ of the complex hyperbolic bisector given by $\Im(z_2)=0$, see also Example 5.1.7 in \cite{G}.

In the case of $\cB^1_K$, the equation
 $$
 {\rm d}_{K}((-1,0),p)={\rm d}_{K}((1,0),p),
 $$
 is 
 $$
 |z+1|^4+(t-y)^2=|z-1|^4+(t+y)^2.
$$
After short calculations we obtain
the hypersurface
\begin{equation}\label{eq-fin}
f(x,y,t)=x(x^2+y^2+1)-yt=0,
\end{equation}
which is also the spinal sphere,
$\mathfrak{S}_1$.

Now conversely, any spinal sphere may be mapped to one of  $\mathfrak{S}_0$ or $\mathfrak{S}_1$ under an element of ${\rm Sim}(\fH)$. The proof is thus complete.
\end{proof}

\subsection{Horizontal geometry of Kor\'anyi bisectors}\label{sec-horcurv}
There is a major distinction in the horizontal geometry of Kor\'anyi bisectors. Before we state our result, we review some basic features of horizontal geometry of hypersurfaces in $\fH$.
Let $F:\fH\to \R$ be a $\mathcal{C}^2$ map and consider the hypersurface $\mathcal{S}$ in $\fH$ defined by the equation $F(x,y,t)=0$. The {\it horizontal normal to} $\mathcal{S}$ is the vector field
$$
N^h_\mathcal{S}=XF\cdot X+YF\cdot Y.
$$
The characteristic locus of $\mathcal{S}$ is the set
$$
C(\mathcal{S})=\{p\in\mathcal{S}\;|\;X_p(F)=Y_p(F)=0\}.
$$
The {\it unit horizontal normal to} $\mathcal{S}$ is
$$
n^h_\mathcal{S}=\frac{N^h_\mathcal{S}}{|N^h_\mathcal{S}|_h},\quad |N^h_\mathcal{S}|_h=\left[(XF)^2+(YF)^2\right]^{1/2}.
$$
Set $n^h_\mathcal{S}=n_1\cdot X+n_2\cdot Y$.
The {\it horizontal mean curvature of} $\mathcal{S}$  is then defined as
\begin{equation}\label{eq-Hh}
2H^h=
X(n_1)+Y(n_2).
\end{equation}
Straightforward calculations deduce
\begin{eqnarray*}
&&
X(n_1)=\frac{XXF\cdot (YF)^2-XF\cdot YF\cdot XYF}{\left[(XF)^2+(YF)^2\right]^{3/2}},\\
&&
Y(n_1)=\frac{YXF\cdot (YF)^2-XF\cdot YF\cdot YYF}{\left[(XF)^2+(YF)^2\right]^{3/2}},\\
&&
X(n_2)=\frac{XYF\cdot (XF)^2-XF\cdot YF\cdot XXF}{\left[(XF)^2+(YF)^2\right]^{3/2}},\\
&&
Y(n_2)=\frac{YYF\cdot (XF)^2-XF\cdot YF\cdot YXF}{\left[(XF)^2+(YF)^2\right]^{3/2}}.
\end{eqnarray*}
The above relations show that Eq. (\ref{eq-Hh}) also reads as
\begin{equation}\label{eq-Hexp}
2H^h=\frac{(YF)^2\cdot XXF+(XF)^2\cdot YYF- XF\cdot YF\cdot(XYF+YXF)}{\left[(XF)^2+(YF)^2\right]^{3/2}}.
\end{equation}
Horizontal mean curvature is invariant under Heisenberg similarities. The surface $\mathcal{S}$ is horizontally minimal if $H^h(\mathcal{S})\equiv 0$.
\begin{prop}\label{prop-min}
  In the case where the points lie on an infinite $\C$-circle, a Kor\'anyi bisector is a horizontally minimal surface.
 \end{prop}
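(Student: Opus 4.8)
The plan is to exploit the invariance of the horizontal mean curvature under ${\rm Sim}(\fH)$, already recorded above, so that it suffices to verify minimality for a single normalized representative. When $p_1,p_2$ lie on an infinite $\C$-circle we may take $p_1=(0,-1)$ and $p_2=(0,1)$, and by the computation carried out in the proof of Theorem \ref{thm-KB} the bisector is the plane $\cB^0_K=\{(x,y,t)\in\fH\;|\;t=0\}$. I would therefore take the defining function $F(x,y,t)=t$ and feed it directly into the formula (\ref{eq-Hexp}).

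First I would compute the horizontal gradient. Since $X=\partial_x+2y\partial_t$ and $Y=\partial_y-2x\partial_t$, one finds $XF=2y$ and $YF=-2x$, whence $(XF)^2+(YF)^2=4|z|^2$. In particular the characteristic locus $C(\cB^0_K)=\{p\in\cB^0_K\;|\;XF=YF=0\}$ reduces to the single point $(0,0,0)$, so the unit horizontal normal $n^h$ and the curvature $H^h$ are well defined on $\cB^0_K\setminus\{(0,0,0)\}$.

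Next I would compute the four second-order horizontal derivatives. A short calculation gives $XXF=0$, $YYF=0$, $XYF=-2$ and $YXF=2$, so that $XYF+YXF=0$. Substituting these, together with $XF=2y$ and $YF=-2x$, into the numerator of (\ref{eq-Hexp}),
$$
(YF)^2\cdot XXF+(XF)^2\cdot YYF-XF\cdot YF\cdot(XYF+YXF),
$$
every term vanishes identically, so that $2H^h\equiv 0$ on $\cB^0_K\setminus\{(0,0,0)\}$.

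There is essentially no analytic obstacle here: the whole content is the explicit derivative computation, and the only point requiring a word of care is that $H^h$ is defined only away from the characteristic locus, which in this normalization is the single point $(0,0,0)$. The invariance of $H^h$ under Heisenberg similarities then upgrades minimality from the plane $\cB^0_K$ to every Kor\'anyi bisector whose defining points lie on an infinite $\C$-circle, completing the proof.
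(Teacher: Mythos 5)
Your proposal is correct and follows essentially the same route as the paper: normalize to $p_1=(0,-1)$, $p_2=(0,1)$ via the similarity invariance of $H^h$, identify the bisector with the plane $t=0$, and verify minimality by direct computation away from the single characteristic point $(0,0,0)$. The only cosmetic difference is that you substitute the second horizontal derivatives into the expanded formula (\ref{eq-Hexp}), whereas the paper computes $X(n_1)+Y(n_2)$ from the unit horizontal normal directly; the two computations are equivalent.
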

 \begin{proof}
The complex plane defined by $F(x,y,t)=t=0$ is well known to be horizontally minimal. For clarity, we carry out the details: We have
$$
XF=2y,\quad YF=-2x,
$$
therefore the characteristic locus comprises the single point $(0,0,0)$. Away from this point,
$$
n^h=\frac{y\cdot X-x\cdot Y}{\sqrt{x^2+y^2}}.
$$
Hence
\begin{eqnarray*}
2H^h&=&X(y/\sqrt{x^2+y^2})-Y(x/\sqrt{x^2+y^2})\\
&=&\partial_x(y/\sqrt{x^2+y^2})-\partial_y(x/\sqrt{x^2+y^2})\\
&=&0.
\end{eqnarray*}
 \end{proof}
This is not the case when the points defining the Kor\'anyi bisector lie in a finite $\C$-circle:
\begin{prop}\label{prop-notmin}
The horizontal mean curvature of the spinal sphere $\mathfrak{S}_1$ diverge to infinity near the characteristic points $(0,\pm 1,0)$ and tends to 0 away from those points.
\end{prop}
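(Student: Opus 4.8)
The plan is to compute the horizontal mean curvature of $\mathfrak{S}_1$ explicitly from Eq.~\eqref{eq-Hexp} and then read off its asymptotics. Writing $F(x,y,t)=x(x^2+y^2+1)-yt$ and applying the horizontal fields, I first obtain $XF=3x^2-y^2+1$ and $YF=4xy-t$. Intersecting $XF=YF=0$ with the surface equation $F=0$ forces $x=0$, $y=\pm1$, $t=0$, so the characteristic locus $C(\mathfrak{S}_1)$ consists exactly of the two points $(0,\pm1,0)$, as claimed; this also pins down the only places where the denominator $|N^h|_h=[(XF)^2+(YF)^2]^{1/2}$ can degenerate along the surface.

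Next I would substitute the second-order derivatives into Eq.~\eqref{eq-Hexp}. A direct computation gives $XXF=YYF=6x$ together with $XYF=2y$ and $YXF=-2y$, so the mixed term $XYF+YXF$ vanishes identically. Consequently the numerator of Eq.~\eqref{eq-Hexp} collapses to $6x\,[(XF)^2+(YF)^2]$, and one obtains the clean expression
\begin{equation*}
2H^h=\frac{6x}{\sqrt{(3x^2-y^2+1)^2+(4xy-t)^2}}.
\end{equation*}
This formula is the heart of the argument: both the sign and the blow-up of $H^h$ are governed by the competition between the factor $x$ in the numerator and the vanishing of $|N^h|_h$ in the denominator.

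Finally I would study this ratio in two regimes. Near $(0,\pm1,0)$ the surface is a graph $t=x(x^2+y^2+1)/y$ (since $F_t=-y\neq0$ there), so I would substitute this parametrization and expand to leading order in the local coordinates $(x,\,y\mp1)$, seeking approach curves along which the denominator vanishes faster than the numerator $6x$; such curves are the mechanism that would force $H^h$ to diverge near the characteristic points. For the complementary regime I would bound $|N^h|_h$ from below by its dominant quadratic term on the part of $\mathfrak{S}_1$ that is bounded away from $C(\mathfrak{S}_1)$ and tends to infinity, forcing $H^h\to0$ and recovering the statement that the surface is horizontally minimal away from its characteristic locus. The main obstacle is precisely this indeterminate $0/0$ behaviour at $(0,\pm1,0)$: both numerator and denominator vanish along the surface, so settling the blow-up requires pinning down their exact relative orders of vanishing along admissible approach curves rather than relying on the ambient expression, and the outcome is genuinely direction-dependent. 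Invariance of $H^h$ under ${\rm Sim}(\fH)$ then transports the conclusion to every spinal sphere arising from points on a finite $\C$-circle.
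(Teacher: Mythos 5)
Your computations track the paper's exactly up to and including the key formula
$$
2H^h=\frac{6x}{\left[(Xf)^2+(Yf)^2\right]^{1/2}},
$$
and the identification of the characteristic locus $\{(0,\pm1,0)\}$; that part is correct. The genuine problem is the last step, where you propose to find approach curves on $\mathfrak{S}_1$ along which the denominator vanishes faster than the numerator $6x$. No such curves exist. Restricting to the surface (where $t=x(x^2+y^2+1)/y$ for $y\neq 0$) gives $Yf=4xy-t=x(3y^2-x^2-1)/y$, which near $(0,\pm1,0)$ equals $\pm 2x\,(1+o(1))$. Hence on the surface
$$
\left[(Xf)^2+(Yf)^2\right]^{1/2}\;\geq\;|Yf|\;=\;2|x|\,(1+o(1)),
$$
so $|2H^h|\leq 3+o(1)$: the horizontal mean curvature is \emph{bounded} near the characteristic points. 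What actually happens is that the limit is direction-dependent: along $x=0$ one gets $0$, along $y=\pm1$ one gets $\pm 3/2$, and along $y=\pm1+\alpha x$ one gets $\pm\tfrac{3}{2}(1+\alpha^2)^{-1/2}$, so every value in $[-3/2,3/2]$ occurs as a limit. Thus $H^h$ has no limit at $(0,\pm1,0)$ but it certainly does not diverge. Your own closing remark that ``the outcome is genuinely direction-dependent'' is the right observation; carried to the end it refutes, rather than establishes, the blow-up you set out to prove.

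You should also know that this discrepancy is not of your making: the proposition as printed asserts divergence, whereas the paper's own proof, after deriving the same restricted expression $H^h(x,y)=3xy\left[y^2(3x^2-y^2+1)^2+x^2(3y^2-x^2-1)^2\right]^{-1/2}$, concludes that the curvature ``tends to zero away from the critical points and is bounded near them.'' So the statement itself appears to be in error, and no strategy will deliver the claimed divergence. The remaining part of your plan is sound and matches the paper: away from the characteristic locus the denominator of the restricted expression grows like the cube of $|(x,y)|$ (its leading degree-$6$ form $y^2(3x^2-y^2)^2+x^2(3y^2-x^2)^2$ is positive definite) while the numerator is quadratic, so $H^h\to 0$ at infinity on $\mathfrak{S}_1$.
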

\begin{proof}
 $\mathfrak{S}_1$ is the hypersurface given by $$f(x,y,t)=x(x^2+y^2+1)-yt=0.$$
The partial derivatives of $f$ are
\begin{eqnarray*}
&&
f_x=3x^2+y^2+1,\\
&&
f_y=2xy-t,\\
&&
f_t=-y.
\end{eqnarray*}
There are no singular points here, thus $\mathfrak{S}_1$ is a $\mathcal{C}^2$ hypersurface. Now,
\begin{eqnarray*}
&&
Xf=3x^2-y^2+1,\\
&&
Yf=4xy-t.
\end{eqnarray*}
Therefore the characteristic locus is the set of points belonging to both the curve defined by the equations
$$
y^2-3x^2=1,\quad t=4xy,
$$
that is, the intersection of a hyperbolic cylindre and a saddle surface, and to the surface $f(x,y,t)=0$. Plugging in the former two equations in the latter, we have
$$
x(x^2+3x^2+1)-4xy^2=0\implies x=0\;\text{or}\; 4x^2-4y^2=1.
$$
If $x=0$, then $t=0$ and $y^2=1$ so we obtain the points $(0,\pm 1,0)$. If $4x^2-4y^2=1$, then this together with $y^2-3x^2=1$ gives $-8x^2=5$ which is absurd. We conclude that the characteristic locus of the surface comprises the two points $(0,\pm 1,0)$.

As now for the second derivatives, we have
\begin{eqnarray*}
&&
XXf=6x,\quad YYf=6x,\\
&&
XYf=2y,\quad YXf=-2y.
\end{eqnarray*}
Threfore by formula (\ref{eq-Hexp}) we immediately obtain
$$
2H^h=\frac{XXF}{\left[(XF)^2+(YF)^2\right]^{1/2}}=\frac{6x}{\left[(3x^2-y^2+1)^2+(4xy-t)^2\right]^{1/2}}.
$$
The only points $(x,y,t)$ on the surface with $y=0$ are points of the form $(0,0,t)$. At those points $H^h=0$. When $y\neq 0$ we obtain from $f(x,y,t)=0$ that
$$
t=\frac{x}{y}(x^2+y^2+1).
$$
In this manner $H^h$ becomes a function $H^h=H^h(x,y)$ with
$$
H^h(x,y)=\frac{3xy}{\left[y^2(3x^2-y^2+1)^2+x^2(3y^2-x^2-1)^2\right]^{1/2}}.
$$
\begin{center}
\includegraphics[scale=0.3]{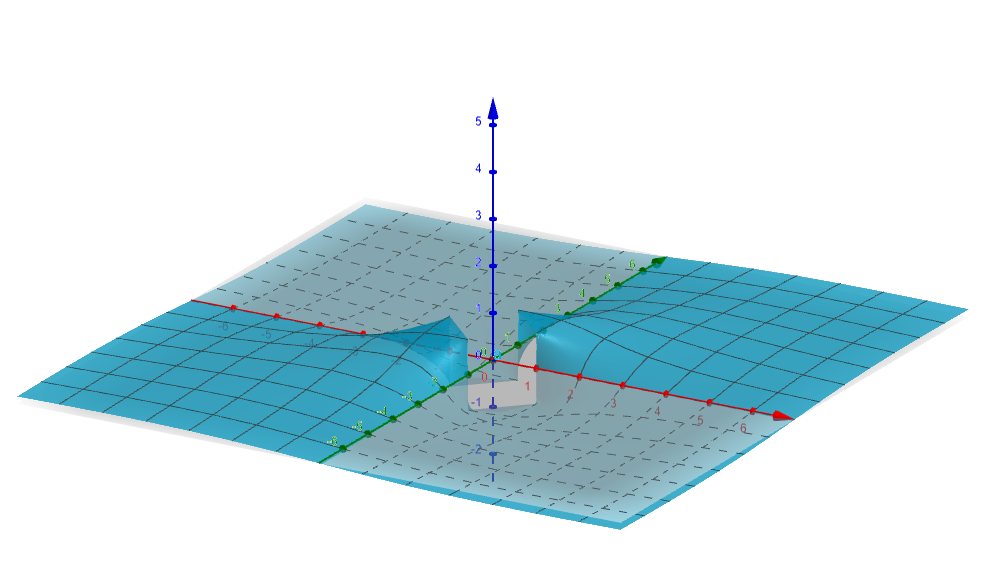}
\begin{figure}[!h]
\centering{The horizontal mean curvature of the surface $f=0.$
}
\end{figure}
\end{center}
 It is now straightforward to show (see also the figure) that the curvature tends to zero away from the critical points and it is bounded near them.
\end{proof}


\end{document}